\newtheorem{theorem}{Theorem}[section]
\newtheorem{definition}[theorem]{Definition}
\newtheorem{proposition}[theorem]{Proposition}
\newtheorem{remark}[theorem]{Remark}
\newcommand\Q{\mathbb{Q}}
\newcommand\R{\mathbb{R}}
\newcommand\Z{\mathbb{Z}}
\newcommand\F{\mathbb{F}}
\newcommand\C{\mathbb{C} }
\newcommand{\TC}{\mathrm{TC}}
\newcommand{\ct}{\mathrm{cat}}
\newcommand{\cl}{\mathrm{cup}}
\newcommand{\sct}{\mathrm{secat}}
\newcolumntype{x}[1]{>{\centering\arraybackslash}p{#1}}
\begin{document}
\begin{center}
{ 
       {\Large \textbf { \sc Higher (equivariant) topological complexity of Milnor manifolds
          }
       }
\\

\medskip

 {\sc Navnath Daundkar}\\
{\footnotesize Indian Institute of Science Education and Research Pune, India}\\

{\footnotesize e-mail: {\it navnath.daundkar@acads.iiserpune.ac.in}}}\\

 {\sc Bittu Singh}\\
{\footnotesize Indian Institute of Technology Bombay, India}\\

{\footnotesize e-mail: {\it 22d0781@iitb.ac.in}}\\

\end{center}

\medskip

\begin{center}
  {\sc Abstract}\\
\end{center}
J. Milnor introduced a specific class of codimension-$1$ submanifolds in the product of projective spaces, known as Milnor manifolds. This paper establishes precise bounds on the higher topological complexity of these manifolds and provides exact values for this invariant for numerous Milnor manifolds.   
Furthermore, we improve the upper bounds on the higher equivariant topological complexity. As an application, we obtain sharper bounds on the higher equivariant topological complexity of Milnor manifolds with free $\mathbb{Z}_2$ and $S^1$-actions.

\hrulefill

{\small \textbf{Keywords:} {LS category, zero-divisors-cup-length,  higher (equivariant) topological complexity, Milnor manifolds}}

\indent {\small {\bf 2020 Mathematics Subject Classification:}} {{55M30, 55S40, 55P15, 57N65.}}

\section{Introduction and background}   \label{sec:intro}
A motion planning algorithm for a mechanical system is a function that associates a pair of its states $(x,y)$, with a continuous motion from $x$ to $y$. 
In other words, a \emph{motion planning algorithm} can be understood as a section of the free path space fibration $\pi:X^I\to X\times X,$ defined by
\[\pi(\gamma)=(\gamma(0),\gamma(1)), \]
where $X^I$ is the free path space of $X$, equipped with the compact open topology.
Farber \cite{FarberTC} introduced 
the notion of topological complexity
 to compute the complexity of the problem of finding a motion planning algorithm for the configuration space $X$ of a mechanical system.  
The \emph{topological complexity} of a space $X$, denoted by $\TC(X)$ is defined as the least positive integer $r$ for which $X\times X$ can be covered by open sets $\{U_1,\dots, U_r\}$,  where each $U_i$ admits a continuous local section of $\pi$. 
Farber \cite[Theorem 3]{FarberTC} showed that $\TC(X)$ is a numerical homotopy invariant of a space $X$. 

The higher analogue of topological complexity was introduced by Rudyak in \cite{RUD2010}. 
For a path-connected space $X$, consider the fibration $\pi_n: X^I\to X^n$ defined by
\begin{equation}\label{eq: pik fibration}
  \pi_n(\gamma)=\bigg(\gamma(0), \gamma\bigg(\frac{1}{n-1}\bigg),\dots,\gamma\bigg(\frac{n-2}{n-1}\bigg),\gamma(1)\bigg).   
\end{equation}
The \emph{higher topological complexity} of $X$,  denoted by $\TC_n(X)$, is the least positive integer $r$ for which $X^n$ can be covered by open sets $\{U_1,\dots, U_r\}$,  where each $U_i$ admits a continuous local section of $\pi_n$ .
Note that when $n=2$, $\TC_n(X)$ coincides with $\TC(X)$.

There is an old invariant of topological spaces called the \emph{LS category},  introduced by Lusternik and Schnirelmann in \cite{LScat}. 
The LS category of a space $X$, denoted by $\mathrm{cat}(X)$, is the smallest positive integer $r$ such that $X$ can be covered by $r$ open subsets $V_1, \dots, V_r$, where each inclusion $V_i\xhookrightarrow{} X$ is null-homotopic.
The following inequalities were established in  \cite{gonzalezhighertc}: 
\begin{equation}\label{eq: tcn lbub}
\mathrm{cat}(X^{n-1})\leq \TC_n(X)\leq \mathrm{cat}(X^n).
\end{equation}

These invariants are special cases of a broader concept known as the sectional category of a map. More generally, the equivariant sectional category of an equivariant map was introduced in \cite{colmangranteqtc}. 
Let  $p\colon E \to B$ be a $G$-map. 
The \emph{equivariant sectional category} of $p$, denoted by $\sct_G(p)$, is the least positive integer $r$ such that there exists a $G$-invariant open cover $\{V_1, \dots, V_r\}$ of $B$, and for each $1\leq i\leq r$, a $G$-map $\sigma_i \colon V_i \to E$ satisfying $f \sigma_i \simeq_G \iota_{V_i} \colon V_i \hookrightarrow B$. 
If no such $r$ exists, we say $\sct_G(p)=\infty$. 
We note that for $G$-fibrations, we can replace $\simeq_G$ by $=$ in the above definition. 
Observe that the path space  $B^I$ admits a $G$-action defined by $(g \cdot \gamma) (t) = g\gamma(t)$.
The fibration $\pi_n \colon B^I \to B^n$ as defined in \eqref{eq: pik fibration}, is also a $G$-fibration. 
The \emph{ higher equivariant topological complexity} of $B$, denoted by $\TC_{G,n}(B)$, is defined as  $\TC_{G,n}(B) := \sct_G(\pi_n)$, as discussed in \cite{byehsarkareqtcn}.

Determining the precise values of these invariants is often a challenging task. Over the past two decades, several mathematicians have significantly contributed to approximate these invariants with bounds. 
To be more specific, Farber \cite[Theorem 7]{FarberTC} gave a cohomological lower bound on the topological complexity, and this concept was extended to the higher topological complexity by Rudyak in \cite[Proposition 3.4]{RUD2010}. 
Let $d_n:X\to X^n$ be the diagonal map, and let  $\cl(X,n)$ denote the cup-length
of elements in the kernel of the map induced in cohomology by $d_n$. 
More precisely, $\cl(X, n)$ is the
largest integer $m$ for which there exist cohomology classes $u_i \in H^{\ast}(X^n; G_i)$ such that $d_n^{\ast}(u_i)=0$, and $\prod_{i=1}^{m}u_i\neq 0$ in $H^{\ast}(X^n;\otimes_{i=1}^m G_i)$, where $G_i$ are abelian groups for $1\leq i\leq m$. Then, \cite[Proposition 3.4]{RUD2010}  shows
\begin{equation}\label{eq: lb higher tc}
 \cl(X,n)+1\leq \TC_n(X).   
\end{equation}
For $n=2$, the bound in \eqref{eq: lb higher tc} was established by Farber and it  
is known as the \emph{zero-divisors-cup length}. 
We refer to the non-negative integer $\cl(X,n)$ as the \emph{higher zero-divisors-cup-length} of $X$. 
Let $\cl(X)$ be the cup length of a cohomology ring of $X$. Then the following inequality was shown in \cite[Proposition 1.5]{CLOT}.
\begin{equation}\label{eq: cup-length}
\cl(X)+1 \leq \ct(X).    
\end{equation}  
For a fibration $F\hookrightarrow E \stackrel{p}{\rightarrow} B$ with a paracompact space $B$, the dimension-connectivity upper bound on the $\sct(p)$ established in \cite[Theorem 5]{Svarc61} is  given as follows:
\begin{equation}
    \sct(p) \leq \frac{\mathrm{dim}(B)}{\mathrm{conn}(F)+2}+1,
\end{equation}
where, $\mathrm{conn}(F)$ is the connectivity of $F$ and $\mathrm{dim}(B)$ is the dimension of $B$.
In particular, if $X$ is $m$-connected, then we have
\begin{equation}\label{eq: usual dim ub}
 \TC_n(X)\leq \frac{n\cdot\mathrm{hdim}(X)}{m+1}+1 , 
\end{equation}
\begin{equation}\label{thm: catdimub}
\ct(X)\leq \frac{\mathrm{hdim}(X)}{m+1}+1.
\end{equation}
Here, dimension is replaced by homotopy dimension because these invariants are homotopy invariants.

In this paper, we investigate the higher (equivariant) topological complexity of Milnor manifolds.
We begin with a brief overview of Milnor manifolds.
In his work \cite{Milnor}, Milnor introduced a class of submanifolds of the products of real and complex projective spaces to define generators for the unoriented cobordism algebra. 
Let $r$ and $s$ be integers such that $0\leq s\leq r$.
A \emph{Milnor manifold}, denoted by $\mathbb{F}H_{r,s}$, is defined as follows:
\[\mathbb{F}H_{r,s}:=\Big\{\big([z_0:\dots:z_r],[w_0:\dots:w_s]\big)\in \F P^r \times \F P^s\mid z_0\overline{w}_0+\cdots+z_s\overline{w}_s=0\Big\}.\] It is called a
\emph{real or complex Milnor manifold} if $\F=\R $ or $\F=\C,$ respectively.
The manifold $\mathbb{F}H_{r,s}$ is a closed, smooth manifold of dimension $k(s+r-1)$, where $k=1$ if $\F =\R$ and  $k=2$ if $\F=\C$.
There is a fiber bundle:
\begin{equation}\label{eq: fbRHrs}
 \mathbb{F}P^{r-1} \stackrel{i}{\hookrightarrow} \mathbb{F}H_{r,s} \stackrel{p}{\longrightarrow} \mathbb{F}P^{s}.
\end{equation}
This fiber bundle can be thought of as a projectivization of a vector bundle whose fiber over a point $\ell\in \F P^s$ is the $r$-dimensional plane $\ell^{\perp}$ in $\F^{r+1}$.
Milnor \cite[Lemma 1]{Milnor} established that the unoriented cobordism algebra of smooth manifolds is
generated by the cobordism classes of real projective spaces and real Milnor manifolds. 
Dey and Singh, in their work \cite{DeyMilnormfds}, provided a characterization of Milnor manifolds that admit free $\Z_2$ and $S^1$-actions, with constraints on $r$, and computed the corresponding equivariant cohomology rings.

Since a fiber bundle generalizes a trivial bundle, it is essential to examine how the higher topological complexity of a fiber bundle's total space compares to that of the trivial bundles total space, (i.e., the product of fiber and base spaces).
Since $\F H_{r,s}$ is a nontrivial $\F P^{r-1}$-bundle over $\F P^s$, it is meaningful to compare the values $\TC_n(\F H_{r,s})$ and $\TC_n(\F P^{r-1}\times \F P^{s})$.
In another notable result,
Farber, Tabichnikov and Yuzvinsky \cite{FarberTCproj} showed a compelling connection between the topological complexity of a real projective space and its immersion dimension. 
They proved that, 
\[\TC(\R P^s)=\mathrm{Imm}(\R P^s)+1\]
except $s=1,3,7$, where $\mathrm{Imm}(\R P^s)$ stands for the smallest dimension of the Euclidean space where $\R P^s$ can
be immersed.
It is worth noting that the Euclidean immersion problem of real projective spaces remains unsolved.
In \cite{tcnproj}, authors showed that the higher topological complexity of a real projective space $\TC_n(\R P^s)$ is related to its immersion dimension $\mathrm{Imm}(\R P^s)$. 
These results show that the information of geometric invariants of manifolds can be derived by investigating motion planning problems associated with them.
Furthermore, (real) Milnor manifolds are an important class of manifolds closely related to (real) projective spaces. Hence, it is important to examine the higher topological complexity of Milnor manifolds and explore the potential deduction of geometric properties associated with them.

The LS category and sharp bounds on the equivariant topological complexity of Milnor manifolds were computed by the first author in \cite{TCmilnor}.
This paper aims to study the higher equivariant topological complexity of Milnor manifolds. 
In \Cref{sec:tcn}, we compute the sharp bounds on the higher zero-divisors-cup-length of Milnor manifolds, showing that, in several cases, the $n^{th}$ topological complexity of real Milnor manifolds can take one of three values: $nd-1, nd, nd+1$, where $d$ represents their dimension (see \Cref{thm: tcn fhrs}). 
We determine the exact values of the higher topological complexity for specific Milnor manifolds $\R H_{r,1}$, where $r$ is a power of $2$ (see \Cref{thm: tcn r2t}).
Furthermore, we demonstrate that, in many cases, $\TC_n(\R H_{r,s})$ can differs from $\TC_n(\R P^{r-1}\times \R P^{s})$ by at most $2$, by applying Davis's work \cite{daviszclk}.
Finally, in \Cref{sec: eqtcn}, we compute bounds on the higher equivariant topological complexity of Milnor manifolds.

\vspace{0.3cm}

\noindent \textbf{Acknowledgement:}
The authors sincerely thank the reviewer for numerous insightful suggestions, comments, and feedback, which significantly improved the clarity and presentation of this article. Special thanks go to Prof. Rekha Santhanam for her constant encouragement and many useful discussions related to the topological complexity.
Navnath Daundkar thanks NBHM for the support through grant 0204/10/(16)/2023/R\&D-II/2789. The second author is grateful for support from the Prime Minister's Research Fellowship (PMRF ID 1302644), Government of India.

\section{Higher topological complexity of Milnor manifolds}\label{sec:tcn}
In this section, we review relevant results on Milnor manifolds,
which we will use to compute bounds on their higher topological complexity.

We begin by recalling the descriptions of the cohomology rings of $\R H_{r,s}$ and $\C H_{r,s}$, as obtained in \cite[Theorem 5.9]{Buchstaber} and \cite[Section 3.2]{Mukherjee}.
\begin{theorem}[{}]\label{thm: coring}
The mod-$2$ cohomology ring of $\R H_{r,s}$ is given by the following expression:
\[H^{\ast}(\R H_{r,s}; \Z_2) \cong \displaystyle\frac{\Z_2[a,b]}{\langle a^{s+1}, b^r+ab^{r-1}+ \dots+a^sb^{r-s} \rangle},\]
where both $a$ and $b$ are homogeneous elements of degree one.
\end{theorem}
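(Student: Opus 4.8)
The plan is to realize $\R H_{r,s}$ via the projectivization bundle $\R P^{r-1}\hookrightarrow \R H_{r,s}\xrightarrow{p}\R P^s$ from \eqref{eq: fbRHrs} and to read off the cohomology through the Leray--Hirsch theorem. First I would let $a=p^*(\alpha)$, where $\alpha\in H^1(\R P^s;\Z_2)$ is the generator, so that $a$ satisfies $a^{s+1}=0$ automatically, being pulled back from $\R P^s$. Next I would set $b=w_1(\xi)$, the first Stiefel--Whitney class of the tautological line bundle $\xi$ over the projectivization; the restriction of $b$ to each fibre $\R P^{r-1}$ is the generator of $H^1(\R P^{r-1};\Z_2)$, hence $1,b,\dots,b^{r-1}$ restrict to a basis of the fibre cohomology. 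By Leray--Hirsch, $H^*(\R H_{r,s};\Z_2)$ is then a free module over $H^*(\R P^s;\Z_2)=\Z_2[a]/\langle a^{s+1}\rangle$ with basis $1,b,\dots,b^{r-1}$, so additively it is $\Z_2$ in each degree $0\le k\le s+r-1$ up to the appropriate count, and as a ring it is generated by $a$ and $b$.

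The one remaining relation is the Whitney/Borel relation expressing $b^r$ in terms of lower powers. Concretely, the bundle being projectivized is the rank-$r$ bundle $\beta$ with fibre $\ell^{\perp}\subset\R^{r+1}$ over $\ell\in\R P^s$; since $\gamma_s\oplus\beta$ is trivial of rank $r+1$ (here $\gamma_s$ is the tautological line bundle over $\R P^s$), one gets $w(\beta)=(1+\alpha)^{-1}$ truncated, i.e. $w(\beta)=1+\alpha+\alpha^2+\cdots+\alpha^s$ in $H^*(\R P^s;\Z_2)$. The defining relation for the projective bundle is $\sum_{i=0}^r b^{r-i}\,p^*w_i(\beta)=0$, which upon substituting $p^*w_i(\beta)=a^i$ for $0\le i\le s$ and $0$ for $i>s$ becomes exactly
\[
b^r + a b^{r-1} + a^2 b^{r-2} + \cdots + a^s b^{r-s} = 0.
\]
I would then check that $\Z_2[a,b]$ modulo the two relations $a^{s+1}$ and $b^r+ab^{r-1}+\cdots+a^sb^{r-s}$ has the right Poincaré series — namely that it is a free $\Z_2[a]/\langle a^{s+1}\rangle$-module on $1,b,\dots,b^{r-1}$ — so that the surjection from this quotient onto $H^*(\R H_{r,s};\Z_2)$ is an isomorphism by a dimension count in each degree.

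The main obstacle, and the only genuinely substantive point, is pinning down the precise coefficients in the projective-bundle relation: one must be careful that the Stiefel--Whitney classes of $\beta$ are $1,\alpha,\dots,\alpha^s$ (and vanish above degree $s$, since $\alpha^{s+1}=0$ in $\R P^s$ even though $\beta$ has rank $r>s$), and that the Leray--Hirsch class $b$ is chosen so that the relation comes out with $+$ signs rather than some other normalization — over $\Z_2$ signs are irrelevant, so this reduces to bookkeeping. Everything else is a formal consequence of Leray--Hirsch and the splitting principle, and the result is already recorded in \cite[Theorem 5.9]{Buchstaber} and \cite[Section 3.2]{Mukherjee}, so I would cite those for the computation and simply indicate the Leray--Hirsch argument as above.
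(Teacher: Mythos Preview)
Your proposal is correct, and in fact it supplies more than the paper does: the paper does not prove \Cref{thm: coring} at all but simply records it as a known computation, citing \cite[Theorem 5.9]{Buchstaber} and \cite[Section 3.2]{Mukherjee}. Your Leray--Hirsch argument via the projectivization bundle \eqref{eq: fbRHrs}, together with the identification $w(\beta)=(1+\alpha)^{-1}=1+\alpha+\cdots+\alpha^s$ and the projective-bundle relation, is precisely the standard derivation behind those references, so there is nothing to correct or compare.
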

It is known from \cite[Page 161]{Mukherjee} that a basis of $H^{\ast}(\R H_{r,s};\Z_2)$ is given by 
\begin{equation}\label{eq: basis}
\{a^ib^j \mid 0\leq i\leq s~~, ~~ 0\leq j\leq r-1\}  .  
\end{equation}
We also note the following relations arise from the cohomology ring given in \cref{thm: coring}, which will be useful in subsequent cohomological calculations:
\begin{equation}\label{eq: rel}
 \begin{split}
  a^{s-1}b^r&=a^sb^{r-1},\\
     a^{s-2}b^r&=a^{s-1}b^{r-1}+a^{s}b^{r-2},\\
    a^{s-3}b^r&=a^{s-2}b^{r-1}+a^{s-1}b^{r-2}+ a^sb^{r-3}.     
 \end{split}  
\end{equation}

The description of the integral cohomology ring of $\C H_{r,s}$ was provided in \cite[Lemma 3]{Buchstaber}. 
We will now state the corresponding rational version.

\begin{theorem}\label{thm: coring CHrs}
The rational cohomology ring of $\C H_{r,s}$ is given by the following expression:
\[H^{\ast}(\C H_{r,s}; \Q) \cong \frac{\Q[x,y]}{\langle x^{s+1},y^r+xy^{r-1}+\dots+x^sy^{r-s} \rangle},\]
where both $x$ and $y$ are homogeneous elements of degree two.   
\end{theorem}

 In their work \cite{DeyMilnormfds}, Dey and Singh characterized Milnor manifolds which admit free $\Z_2$ and $S^1$-actions under certain conditions. 
We list some of their relevant results, that are useful in computing lower bounds on the higher (equivariant) topological complexity of  Milnor manifolds.
\begin{theorem}[{\cite[Corollary 5.6, Corollary 5.8]{DeyMilnormfds}}\label{thm: existence of invo}\label{thm: free invo milnor mfds}]
Let $1<s<r$ such that $r\not\equiv2({\rm mod}~4)$.
Then $\F H_{r,s}$ admits a fixed-point-free involution if and only if both $r$ and $s$ are odd integers.
\end{theorem}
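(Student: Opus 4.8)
The plan is to prove the two implications by quite different methods: sufficiency by an explicit geometric construction, and necessity by a cobordism (Stiefel--Whitney number) obstruction, handling $\F=\R$ and $\F=\C$ in parallel.

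For sufficiency, suppose $r$ and $s$ are both odd, so $r+1$ and $s+1$ are even and $r-s\ge 2$ is even. In the real case I would fix an orthogonal complex structure $J_0$ on $\R^{s+1}$ (an orthogonal linear map with $J_0^2=-\mathrm{id}$), extend it to an orthogonal complex structure $J$ on $\R^{r+1}=\R^{s+1}\oplus\R^{r-s}$ by choosing any orthogonal complex structure on the second summand, and set $T([x],[y])=([Jx],[J_0y])$ on $\R P^r\times\R P^s$. Since $J$ restricts to $J_0$ on the first $s+1$ coordinates and $J_0$ is orthogonal, $T$ preserves the equation $\sum_{i=0}^s x_iy_i=0$, so it restricts to $\R H_{r,s}$; it is well defined on projective classes because $J,J_0$ are linear, it squares to the identity because $J^2=J_0^2=-\mathrm{id}$, and it is fixed-point free because $[J_0y]=[y]$ would force $y$ to be a real eigenvector of $J_0$, which has none. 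The complex case is identical once $J_0,J$ are replaced by $\C$-conjugate-linear, conjugate-unitary maps squaring to $-\mathrm{id}$ (quaternionic structures on $\C^{s+1}$ and $\C^{r+1}$): then $T([z],[w])=([fz],[f_0w])$ carries $\sum z_i\overline{w_i}$ to its complex conjugate and hence preserves $\C H_{r,s}$, and well-definedness, the involution property, and freeness go through verbatim (for the last point, $f_0w=\mu w$ would give $-w=f_0^{2}w=|\mu|^{2}w$). This half uses neither $1<s<r$ in an essential way nor $r\not\equiv 2\pmod 4$.

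For necessity I would invoke the classical fact that a closed manifold $M$ carrying a fixed-point-free involution $\tau$ bounds, being the boundary of the disk bundle of the real line bundle $L\to M/\tau$ associated with the double cover $M\to M/\tau$; hence every Stiefel--Whitney number of $M=\F H_{r,s}$ vanishes, and it is enough to exhibit one that does not when $r\not\equiv 2\pmod 4$ and $r,s$ are not both odd. Under these assumptions either (i) $r$ is odd and $s$ is even, or (ii) $r\equiv 0\pmod 4$. In case (i), multiplicativity of the Euler characteristic for the fibrations \eqref{eq: fbRHrs} and \eqref{eq: fbCHrs} gives $\chi(\F H_{r,s})=\chi(\F P^{r-1})\chi(\F P^s)$, a product of two odd numbers, hence odd; since the top Stiefel--Whitney number equals $\chi\bmod 2$, $\F H_{r,s}$ cannot bound.

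Case (ii) is the heart of the matter: here $\chi(\F H_{r,s})=0$, so a nonzero Stiefel--Whitney number must be found lower in the range. The main computational input is the total Stiefel--Whitney (resp.\ mod-$2$ Chern) class. Since $\F H_{r,s}$ is the projectivization over $\F P^s$ of $\gamma^\perp$ with $w(\gamma^\perp)=(1+a)^{-1}$, $a$ the generator of \Cref{thm: coring} (resp.\ \Cref{thm: coring CHrs}), the projective-bundle formula gives
\[ w(\F H_{r,s})=(1+a)^{s+1}\sum_{k=0}^{s}a^k(1+b)^{r-k}=\frac{(1+a)^{s+1}(1+b)^{r+1}}{1+a+b}, \]
with $b$ the fibrewise hyperplane class and $\langle a^sb^{r-1},[\F H_{r,s}]\rangle=1$; this is the same computation for $\F=\R$ and $\F=\C$ up to doubling degrees, which explains the uniform answer. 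As a clean partial step, viewing $\F H_{r,s}$ as the hypersurface in $\F P^r\times\F P^s$ Poincar\'e dual to $a+b$ and using additivity of the Milnor--Thom characteristic class $s_n$ ($n=\dim$), one finds $s_n(\F H_{r,s})[\F H_{r,s}]=\binom{r+s}{s}\bmod 2$, which by Lucas' theorem is odd --- hence nonzero --- for many pairs with $r\equiv 0\pmod 4$, settling those. For the remaining pairs one has to pull a nonvanishing monomial number $\prod_i w_{\lambda_i}[\F H_{r,s}]$ out of the displayed total class by a direct reduction in $H^*(\F H_{r,s})$ modulo $\sum_{k=0}^s a^kb^{r-k}=0$; the correct partition $\lambda$ depends on the binary expansions of $r$ and $s$ (for small $r$ it can be built from $2$'s and at most one $3$), and verifying nonvanishing reduces to a mod-$2$ binomial-coefficient identity. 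Carrying this out uniformly for all $s$ with $1<s<r$ and all $r\equiv 0\pmod 4$ is the hard part, and it is the step I expect would need an induction on $r+s$ or a direct appeal to the known cobordism classes of Milnor manifolds.
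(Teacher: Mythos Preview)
The paper does not give a proof of this statement at all: it is simply quoted, with citation, from Dey and Singh \cite[Corollary~5.6, Corollary~5.8]{DeyMilnormfds}, and is used only as input to later bounds. So there is no in-paper argument to compare your proposal against.

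That said, a word on your attempt on its own terms. Your sufficiency direction is clean and correct: the orthogonal (resp.\ conjugate-unitary) complex/quaternionic structure on $\F^{s+1}$ extended blockwise to $\F^{r+1}$ yields a well-defined fixed-point-free involution preserving the defining bilinear equation, and this indeed needs only that $r$ and $s$ be odd. For necessity your strategy---``free involution $\Rightarrow$ null-cobordant $\Rightarrow$ all Stiefel--Whitney numbers vanish, now exhibit a nonzero one''---is the right framework, and your Euler-characteristic argument disposes of the case $r$ odd, $s$ even in one line. The genuine gap is your case $r\equiv 0\pmod 4$: the $s_n$-number $\binom{r+s}{s}\bmod 2$ is \emph{not} always odd there (e.g.\ $r=4$, $s=2$ gives $\binom{6}{2}=15$, fine, but $r=8$, $s=2$ gives $\binom{10}{2}=45$, also fine, whereas $r=8$, $s=4$ gives $\binom{12}{4}=495$, odd again---but $r=4$, $s=3$ gives $\binom{7}{3}=35$ odd; one must actually check where Lucas fails, and it does for pairs like $r=8$, $s=3$ with $\binom{11}{3}=165$ odd, yet $r=16$, $s=8$ gives $\binom{24}{8}$ even). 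You acknowledge this yourself: the ``remaining pairs'' step is left as a hoped-for induction or an appeal to the literature, which is exactly what the cited result of Dey and Singh supplies. In short, your write-up reproduces the easy direction and the easy subcase of the hard direction, but the substantive cobordism computation for $r\equiv 0\pmod 4$ is not carried out.
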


\begin{theorem}[{\cite[Proposition 5.9]{DeyMilnormfds}}\label{thm: free S1 action}]
Let $1\leq s\leq r$. Then $S^1$ acts freely on $\R H_{r,s}$ if and only if both $r$ and $s$ are odd integers.   
\end{theorem}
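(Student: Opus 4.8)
The plan is to prove the two implications separately: the ``if'' direction by writing down an explicit free action, and the ``only if'' direction by combining an Euler characteristic computation with the rigidity coming from the principal‑bundle structure of a free circle action.

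\emph{Construction when $r,s$ are both odd.} Here $r+1$ and $s+1$ are even, so fix linear identifications $\R^{r+1}\cong\C^{(r+1)/2}$ and $\R^{s+1}\cong\C^{(s+1)/2}$ under which the standard inclusion $\R^{s+1}\hookrightarrow\R^{r+1}$ is the inclusion of the first $(s+1)/2$ complex coordinates. Under these identifications the defining equation $x_0y_0+\dots+x_sy_s=0$ becomes $\operatorname{Re}\langle x,y\rangle=0$ for the standard Hermitian pairing (taken on the first $(s+1)/2$ complex coordinates of $x$). Let $\zeta\in S^1$ act on $\R P^r\times\R P^s$ by $\zeta\cdot([x],[y])=([\omega x],[\omega y])$, where $\omega$ is either square root of $\zeta$ acting by complex scalar multiplication; this is independent of the choice of $\omega$ because the two choices differ by $-1$, which acts trivially on a real projective space, and it is continuous since $\zeta\mapsto\omega$ admits continuous local branches. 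Since $\langle\omega x,\omega y\rangle=|\omega|^2\langle x,y\rangle=\langle x,y\rangle$, this action preserves $\R H_{r,s}$; and it is free already on $\R P^r\times\R P^s$, because if $\omega x=\lambda x$, $\omega y=\mu y$ with $\lambda,\mu\in\R$ and $x,y\neq0$, then $\omega=\pm1$, so $\zeta=\omega^2=1$. Thus $\R H_{r,s}$ carries a free $S^1$-action, which settles this implication.

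\emph{The converse.} Assume $S^1$ acts freely on $M:=\R H_{r,s}$; we may assume $s\ge1$ (for $s=0$, $M$ is a point) and $(r,s)\neq(1,1)$ (for which $M\cong S^1$). Then $M\to N:=M/S^1$ is a principal $S^1$-bundle over a closed manifold $N$ with $\dim N=r+s-2$, so $\chi(M)=\chi(S^1)\,\chi(N)=0$. From \Cref{thm: coring}, $H^*(M;\Z_2)$ has the basis $\{a^ib^j:0\le i\le s,\ 0\le j\le r-1\}$, so $\chi(M)=1$ when $s$ is even and $r$ is odd and $\chi(M)=0$ otherwise; hence that case is excluded. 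Next, restricting the action to the subgroup $\Z_2\subset S^1$ gives a free involution on $M$; when $1<s<r$ and $r\not\equiv2\pmod4$, \Cref{thm: free invo milnor mfds} with $\F=\R$ forces $r$ and $s$ to be odd, which finishes the proof in that range (in particular it excludes every even $r$ with $1<s<r$ and $r\not\equiv2\pmod4$). What remains are the cases with $r$ even that fall outside \Cref{thm: free invo milnor mfds}, namely $s=1$, or $s=r$, or $r\equiv2\pmod4$; in each of these one argues directly that $M$ cannot be a principal circle bundle. For instance, $\R H_{2,1}$ is the Klein bottle, and a free $S^1$-action would make it a principal $S^1$-bundle over a closed $1$-manifold, hence over $S^1$; but every principal $S^1$-bundle over $S^1$ is trivial, so $M\cong S^1\times S^1$, a contradiction. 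In general one analyzes the mod-$2$ Gysin sequence of $S^1\hookrightarrow M\to N$: since $H^*(M;\Z_2)$ is generated in degree one by $a,b$, a nonzero mod-$2$ Euler class would force $p^*$ to be surjective, hence $H^*(N;\Z_2)/(\bar e)\cong H^*(M;\Z_2)$, contradicting the Gysin identity $\dim_{\Z_2}H^*(M;\Z_2)=2\dim_{\Z_2}\!\big(H^*(N;\Z_2)/(\bar e)\big)$; so $\bar e=0$, the spectral sequence collapses, and one derives a contradiction with the relations $a^{s+1}=0$, $b^r=ab^{r-1}+\dots+a^sb^{r-s}$ by playing the resulting tensor-product structure $H^*(M;\Z_2)\cong H^*(N;\Z_2)\otimes\Lambda[\chi]$ off against Poincaré duality on $N$ and the rational cohomology of $M$ computed from the fibration $\R P^{r-1}\hookrightarrow M\to\R P^s$ and its monodromy.

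The hardest part is this last step. The Euler characteristic obstruction removes only the single possibility ``$r$ odd, $s$ even'', and reduction to an involution is genuinely unavailable when $r\equiv2\pmod4$ — the Klein bottle $\R H_{2,1}$ admits free involutions yet no free $S^1$-action — so the circle-bundle rigidity has to be exploited honestly. The subtlety is that a mod-$2$-trivial circle bundle can reproduce the additive (and much of the multiplicative) structure of $H^*(\R H_{r,s};\Z_2)$, so the contradiction must come from a finer invariant: the precise multiplicative extension in $H^*(M;\Z_2)=H^*(N;\Z_2)\otimes\Lambda[\chi]$ (in particular whether $\chi^2=0$), the torsion in $H^2(N;\Z)$ forced by $\bar e=0$ together with the rational Euler class being nonzero, or a direct incompatibility of the bundle projection with the two defining relations.
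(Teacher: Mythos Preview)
The paper does not prove this statement at all: \Cref{thm: free S1 action} is quoted verbatim from \cite[Proposition 5.9]{DeyMilnormfds} and is used as a black box. So there is no ``paper's own proof'' to compare your attempt against; I will therefore assess your argument on its merits.

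Your ``if'' direction is correct and is essentially the standard construction: for $r,s$ odd one complexifies $\R^{r+1}$ and $\R^{s+1}$, lets $S^1$ act by complex scalar multiplication through the squaring cover $S^1\to S^1$, checks that the Hermitian form (hence its real part) is preserved, and verifies freeness on $\R P^r\times\R P^s$. This is fine.

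The ``only if'' direction, however, is not complete. The Euler–characteristic step is valid and rules out exactly the case $r$ odd, $s$ even (your computation $\chi(\R H_{r,s})=\big(\sum_{i=0}^s(-1)^i\big)\big(\sum_{j=0}^{r-1}(-1)^j\big)$ is correct). Invoking \Cref{thm: free invo milnor mfds} via the $\Z_2\subset S^1$ subgroup is also legitimate in its range of applicability. But the residual cases you list---$s=1$, $s=r$, and $r\equiv 2\pmod 4$ with $r$ even---are precisely where your argument becomes a sketch rather than a proof. You assert that a contradiction arises from the mod-$2$ Gysin sequence and the tensor splitting $H^*(M;\Z_2)\cong H^*(N;\Z_2)\otimes\Lambda[\chi]$, but you do not actually derive it; indeed, in your final paragraph you concede that this splitting can reproduce much of the ring structure of $H^*(\R H_{r,s};\Z_2)$ and that ``the contradiction must come from a finer invariant'', without specifying which one or carrying the computation through. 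The Klein bottle example you give is correct but covers only $(r,s)=(2,1)$. For the general case (e.g.\ $r\equiv 2\pmod 4$ with $1<s<r$) you have not exhibited the obstruction, so as written the converse is unproven. If you want to complete it along these lines you will need a concrete argument---for instance an explicit rational or integral cohomology computation of $\R H_{r,s}$ showing its Betti numbers are incompatible with those of a free $S^1$-quotient, or a Borel spectral sequence computation in $H^*_{S^1}(\R H_{r,s};\Z_2)$ that detects a fixed point---rather than the heuristic you currently have.
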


Throughout the paper, we fix the following notations: For $1\leq i\leq n$, the map $p_i: X^n \rightarrow X$ denotes the projection onto the $i$-th factor of $X$. For a cohomology class $x\in H^{\ast}(X; R)$, the class $x_i$ denotes the pullback $p_i^{\ast}(x)$ for $1\leq i\leq n$. The class $\bar{x}_i=x_i-x_{i-1}$ denotes the higher zero divisors for $2\leq i\leq n$. 

It was shown in \cite{TCmilnor} that, the LS category of $\F H_{r,s}$ is $s+r$.
We compute $\ct(\F H_{r,s})^n$ for any $n$ and  derive bounds on $\TC_n(\R H_{r,s})$.
\begin{proposition}\label{prop:cat-FHrsn}
Let $n$ be a natural number. Then 
\begin{equation}\label{eq: ctfhrs}
  \ct((\F H_{r,s})^n) =n(r+s -1) +1.
\end{equation}
 For $n\geq 2$, we have
\begin{equation}\label{eq: tcn LBUB}
   (n-1)(r+s-1)+1\leq \TC_n(\R H_{r,s}) \leq n(r+s-1)+1.
\end{equation}
Moreover, if both $r$ and $s$ are odd, then $\TC_n(\R H_{r,s}) \leq n(r+s-1)$.
\end{proposition}
\begin{proof}
Let $X=\R H_{r,s}$.
Observe that the following cohomology class \[\prod_{i=1}^n a_{i}^{s} b_{i}^{r-1} \] is a generator of $H^{n(r+s-1)}(X)$.  
Thus, from \eqref{eq: cup-length}, we get $\ct(X^n)\geq n(s+r-1)+1$. 
The inequality $\ct(X^n)\leq n(s+r-1)+1$  follows from \eqref{thm: catdimub}.
Since the cohomology ring of $\C H_{r,s}$ has a description similar to that of $\R H_{r,s}$, one can derive the
inequality, $\ct(\C H_{r,s})^n\geq n(s+r-1)+1$.
Since $\C H_{r,s}$ is simply connected, the inequality $\ct((\C H_{r,s})^n)\leq n(s+r-1)+1$ follows from \eqref{thm: catdimub}.

From \eqref{eq: tcn lbub} and \eqref{eq: ctfhrs}, we conclude \eqref{eq: tcn LBUB}.
It follows from \Cref{thm: free S1 action} that,
when both $r$ and $s$ are odd, $\R H_{r,s}$ admits a free $S^1$-action. 
Consequently, the inequality $\TC_n(\R H_{r,s}) \leq n(r+s+1)$ follows from \cite[Corollary 4.7]{daundkar2023group}.
\end{proof}

We will now compute the exact value of $\TC_n(\C H_{r,s})$.
\begin{theorem}
The higher topological complexity of $\C H_{r,s}$ is given by
 $$\TC_n(\C H_{r,s}) =n(r+s -1) +1.$$
\end{theorem}
\begin{proof}
For $x\in H^{\ast}(\C H_{r,s};\Q)$, using \Cref{thm: coring CHrs}, we have
 \begin{align*}
        \bar{x}_i^{2s-1}&=(x_i-x_{i-1})^{2s-1}\\
        &= \sum_{j=s-1}^s (-1)^{2s-1-j} \binom{2s-1}{j} x_i^j x_{i-1}^{2s-1-j}\\
        &=\gamma (x_i^{s-1}x_{i-1}^s- x_i^{s}x_{i-1}^{s-1}),
\end{align*}
where $\gamma=(-1)^{s}\binom{2s-1}{s-1}.$
For $y\in H^{\ast}(\C H_{r,s};\Q)$, we obtain the similar expression $$\bar y_i^{2r-1}=\gamma' (y_i^{r-1}y_{i-1}^r- y_i^{r}y_{i-1}^{r-1}),$$
where $\gamma'=(-1)^{r}\binom{2r-1}{r-1}.$ 
Since $x^sy^r=0$ follows from dimensional reason, we now have
\begin{align*}
    \bar{x}_i^{2s-1}\bar{y}_i^{2r-1}&=-\gamma \gamma' (x_i^{s-1}x_{i-1}^sy_i^ry_{i-1}^{r-1}+x_i^{s}x_{i-1}^{s-1}y_i^{r-1}y_{i-1}^{r})\\
    &= 2\gamma \gamma'(x_{i-1}^sy_{i-1}^{r-1})(x_i^s y_i^{r-1}).
\end{align*}
We have used the fact that $x^sy^{r-1}+x^{s-1}y^r=0$, which follows from \cref{thm: coring CHrs}, in the second equality of the previous expression.

\noindent \textbf{Case-1:}\emph{ $n=2k$}.\\
Consider the following product \begin{align*}
    \prod_{i=1}^k  \bar{x}_{2i}^{2s-1}\bar{y}_{2i}^{2r-1}=(2\gamma \gamma')^k \prod_{i=1}^{2k} x_i^sy_i^{r-1}.
\end{align*}
Since $x_i^sy_i^{r-1}$ is the pullback of the generator of $H^\ast (\C H_{r,s})$, the above product is non-zero.
Therefore, \eqref{eq: lb higher tc} gives, $$\TC_n(\C H_{r,s})
\geq k\{(2r-1)+(2s-1)\}+1=n(r+s-1)+1.$$
\noindent \textbf{Case-2:}\emph{ $n=2k+1$}.\\
Consider the product
\[(\prod_{i=1}^k  \bar{x}_{2i}^{2s-1}\bar{y}_{2i}^{2r-1})(\bar x_{2k+1}^s \bar y_{2k+1}^{r-1})=(2\gamma \gamma')^k \prod_{i=1}^{2k+1} x_i^sy_i^{r-1} \neq 0.\]
Thus, for any $n$, we have $$\TC_n(\C H_{r,s})
\geq n(r+s-1)+1.$$
Since $\C H_{r,s}$ is simply connected, the other inequality then follows from \eqref{eq: usual dim ub}.
\end{proof}

In the following theorem, we establish a lower bound for $\TC_n(\R H_{r,s})$ that is stronger than the one in \Cref{prop:cat-FHrsn} when $r$ is a power of $2$ and compute the exact values in several specific cases.

\begin{theorem}\label{thm: tcn r2t}
Let $r$ be a power of $2$ ($r=2^t$, $t\geq 0$) and $n\geq 3$. 
 Then   
 \begin{equation}\label{eq: tcn r2t}
  n(s+r-1)-s+2  \leq  \TC_n(\R H_{r,s})\leq n(s+r-1)+1.
 \end{equation}
 In particular, if $s=1$, then $\TC_n(\R H_{r,1})=nr+1$.
\end{theorem}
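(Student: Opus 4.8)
The plan is to establish the lower bound $\TC_n(\F H_{r,s}) \geq n(s+r-1) - s + 2$ by a cohomological computation in $H^{\ast}((\F H_{r,s})^n; \Z_2)$, refining the argument used in \Cref{thm: tcn fhrs}, and then to deduce the upper bound from the dimensional inequality \eqref{eq: usual dim ub}. For the special case $s=1$, note that $\F H_{r,1}$ has dimension $r$ (real case) with $r=2^t$, so the lower bound reads $n r - 1 + 2 - 1 = nr + 1$... let me re-examine: $n(s+r-1) - s + 2 = n(1 + 2^t - 1) - 1 + 2 = n 2^t + 1$, which matches the dimensional upper bound $n \cdot \dim(\F H_{r,1}) + 1 = n 2^t + 1$ exactly, forcing equality. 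So the crux is purely the lower bound in \eqref{eq: tcn r2t}.

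First I would set up notation exactly as in \Cref{thm: tcn fhrs}: let $p_i \colon (\F H_{r,s})^n \to \F H_{r,s}$ be the projection onto the $i$-th factor, put $a_i = p_i^{\ast}(a)$, $b_i = p_i^{\ast}(b)$ (degree $1$ in the real case, degree $2$ in the complex case), and form the zero-divisors $\bar a_i = a_{i-1} + a_i$, $\bar b_i = b_{i-1} + b_i$ for $2 \leq i \leq n$, all of which lie in $\ker(d_n^{\ast})$. The key exploitable fact is that $r = 2^t$, so by \Cref{thm: coring} the relation $b^r = a b^{r-1} + \cdots + a^s b^{r-s}$ holds and the binomial coefficients $\binom{2^t - 1}{j}$ and $\binom{2^{t+1}-1}{j}$ are all odd mod $2$; this is what let us compute $\bar b_i^{2r-1}$ cleanly before. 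I would then build a product of the form $\prod_{i=2}^{n} \bar b_i^{\,r} \cdot \prod \bar a_{\bullet}^{\bullet}$ — more precisely, use the $n-1$ classes $\bar b_2, \ldots, \bar b_n$ each raised to a power near $r$ together with $n-1$ classes built from $\bar a_i$ each raised to a power near $s$, and then append further factors of $\bar a_i$ or $\bar b_i$ to push the total degree up to $n(s+r-1) - s + 1$, carefully tracking (via \Cref{thm: coring}, i.e.\ the relations $a^{s+1}=0$ and the $b^r$ relation) that the resulting class expands into a sum of monomials $\prod_i a_i^{\alpha_i} b_i^{\beta_i}$ with $0 \leq \alpha_i \leq s$, $0 \leq \beta_i \leq r-1$, containing at least one monomial that appears with coefficient $1$ and cannot be cancelled by any other monomial in the expansion. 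The count of zero-divisor factors used, which is $(n-1) + (\text{the number of extra appended factors})$, must come out to $n(s+r-1) - s + 1$, giving $\TC_n \geq n(s+r-1) - s + 2$ via \eqref{eq: lb higher tc}.

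The main obstacle I anticipate is the bookkeeping to guarantee non-cancellation of the top-degree monomial: unlike the $s = 2^{t_1}+1$ case in \Cref{thm: tcn fhrs}, here $s$ is arbitrary, so the "reduction" of high powers of $\bar a_i$ via $a_i^{s+1} = 0$ interacts with the $\bar b$-powers (whose reduction introduces $a$-factors through the $b^r$ relation), and one must choose the exponents so these interactions do not collide. The cleanest route is probably to isolate a single distinguished monomial — by analogy with the term $a_1^{s-1}b_1^r \cdots$ highlighted in the proof of \Cref{thm: tcn fhrs}, something like $\prod_{i} a_i^{\epsilon_i} b_i^{r-1}$ with a sparse choice of $\epsilon_i \in \{0,1\}$ — and verify directly that in the full expansion of the chosen product this monomial has a unique preimage. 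Once the non-vanishing of a product of $n(s+r-1)-s+1$ zero-divisors is secured, the rest is immediate: \eqref{eq: lb higher tc} gives the lower bound, \eqref{eq: usual dim ub} gives the upper bound, and for $s=1$ the two bounds coincide at $n2^t + 1$. For the complex case, since $H^{\ast}(\C H_{r,s};\Z_2)$ has the identical ring structure (with generators in degree $2$), the same monomial computation goes through verbatim, and the upper bound in the complex case can alternatively be taken from \cite[Theorem 3.9]{gonzalezhighertc} as was done in \Cref{thm: tcn fhrs} — though for $s=1$ the dimensional bound \eqref{eq: usual dim ub} already suffices.
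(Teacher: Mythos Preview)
Your high-level strategy matches the paper's exactly: obtain the lower bound from a zero-divisor cup-length computation via \eqref{eq: lb higher tc}, obtain the upper bound from \eqref{eq: usual dim ub} (or \cite[Theorem 3.9]{gonzalezhighertc} in the complex case), and observe that for $s=1$ the two bounds coincide at $n2^t+1$.

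However, the proposal stays at the level of a plan and never names a specific product of $n(s+r-1)-s+1$ zero-divisors whose non-vanishing is verified; that construction is the entire content of the argument, and you have not supplied it. The paper's construction, which your sketch does not arrive at, is as follows. For $n=2k$ one takes
\[
A=\prod_{i=1}^{k}\bar a_{2i}^{\,s},\qquad
B=\prod_{i=1}^{k}\bar b_{2i}^{\,2r-1},\qquad
C=\prod_{i=1}^{k-1}(a_{2i}+a_{2i+2})^{\,s-1},
\]
and shows $A\cdot B\cdot C\neq 0$ by exhibiting a surviving monomial; the factor count is $ks+k(2r-1)+(k-1)(s-1)=n(s+r-1)-s+1$. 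For $n=2k+1$ one appends $\bar a_{2k+1}^{\,s}\cdot \bar b_{2k+1}^{\,r-1}$. Two concrete ideas in this construction are absent from your plan: (i) the \emph{even--odd pairing}, using only $\bar a_{2i},\bar b_{2i}$ so that the blocks live in disjoint tensor factors and do not interfere, rather than your suggestion of taking all $\bar b_2,\dots,\bar b_n$ simultaneously; and (ii) the ``bridging'' factor $C$ linking the even-indexed $a$-variables, which is precisely what lifts the count from roughly $k(2r-1)+ks+1$ up to $n(s+r-1)-s+2$. Without $C$ (or some substitute) the count falls short, and nothing in your outline indicates how you would close that gap.

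A minor point: your closing bookkeeping ``$(n-1)+(\text{extra appended factors})$'' is not the right quantity---the zero-divisor cup-length is the total number of factors in the product (the sum of all exponents), not the number of distinct zero-divisor classes employed.
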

\begin{proof}

Let $b\in H^{\ast}(\R H_{r,s}; \Z_2)$.
Since $\binom{2^t}{j}=1$(mod 2) if and only if $j=0$ or $2^t$ (a consequence of Lucas's theorem), we obtain
$\bar{b}_{i}^{r}=b_{i-1}^r+b_i^r$. 
Taking the product of $\bar{b}_{i}^{r}$'s gives us the following expression \[\prod_{i=2}^n\bar{b}_i^r=\sum_{i=1}^n\prod_{\substack{j=1\\ j \neq i}}^nb_j^r.\]
Here, we have used the fact that $b^{r+j}=0$ for $j \geq 1$ which follows from \Cref{thm: coring}.
Next multiplying $\prod_{i=2}^n\bar{b}_i^r$ by $\bar{b}_2^{r-1}$, yields
\begin{align*}
    \bar{b}_2^{r-1}\prod_{i=2}^n\bar{b}_i^r &=\bigg( \sum_{j=1}^n b_1^j b_2^{r-1-j} \bigg)  \bigg(\sum_{i=1}^n\prod_{\substack{j=1\\ j \neq i}}^nb_j^r \bigg)\\
    &=b_1^{r-1}\prod_{\substack{j=1\\j \neq 1}}^nb_j^r + b_2^{r-1}  \prod_{\substack{j=1\\ j \neq 2}}^nb_j^r.
\end{align*}
Now, multiplying by $\bar b_3$ to $\bar{b}_2^{r-1}\prod_{i=2}^n\bar{b}_i^r$ (since $n\geq 3$), we obtain:
\[\bar{b}_2^{2r-1}\bar{b}_{3}^{r+1}\prod_{i=4}^n\bar{b}_i^r=\prod_{j=1}^nb_j^r.\]

Note that the product $\prod_{i=2}^n(a_1+a_i)^{s-1}$ contains the unique term $\prod_{i=2}^na_i^{s-1}$.
Therefore. the following product
\[\bigg( \bar b_2^{2r-1} \bar{b}_{3}^{r+1}\prod_{i=4}^n\bar{b}_i^r \bigg)  \prod_{i=2}^n(a_1+a_i)^{s-1}\] contains the term $b_1^r\prod_{i=2}^na_i^{s-1}b_i^r$ which cannot be killed by any other terms in the product.
Consequently, using \eqref{eq: lb higher tc}, we obtain the following inequality  \[\TC_n(\R H_{r,s})\geq n(s+r-1)-s+2.\]
The other inequality of \eqref{eq: tcn r2t} follows from \eqref{eq: tcn LBUB}.
\end{proof}

\begin{remark}
Note that if $s=1$ and $r=2$, then $\R H_{2,1}$ is homeomorphic to the Klein bottle $K$.  
Furthermore, \Cref{thm: tcn r2t} shows that $\TC_n(K)=2n+1$ for $n\geq 3$. 
Therefore, the proof of  \Cref{thm: tcn r2t} recovers the result \cite[Proposition 5.1]{Flag} for the Klein bottle case. Note that \cite[Proposition 5.1]{Flag} computes the higher topological complexity of closed surfaces, except for the sphere and the torus.
\end{remark}

 In \cite[Theorem 3.2]{TCmilnor}, the first author of this paper showed that if $s=2^{t_1}+1$ and $r=2^{t_2}$, then $\TC(\R H_{r,s})$ is either $2(s+r-1)-1$ or $2(s+r-1)$. 
In this case, the cohomological methods were used to compute the lower bound on $\TC(\R H_{r,s})$. 
The non-maximality of $\TC(\R H_{r,s})$ followed using the fact that the $\pi_1(\R H_{r,s})=\Z_2\times \Z_2$ when $r>2$ and $s>1$, and using Cohen and Vandembroucq's results from \cite{abfundamentaltc}.

In the following result, we estimate the number $\mathrm{cup}(\R H_{r,s},n)$ to obtain the sharp bounds on $\TC_n(\R H_{r,s})$ in several cases.
\begin{theorem}\label{thm: tcn fhrs}
Suppose one of the following holds:
\begin{enumerate}
    \item $s= 2^{t_1} +1$ and $r=2^{t_2}$ for some positive integers $t_1, t_2$.
    \item  $s= 2^{p_1}$ and $r=2^{p_2}+1$ for some positive integers $p_1, p_2$.
\end{enumerate}
Then, for $n\geq 2$, we have
\begin{equation}\label{eq: tc ineq}
n(s+r-1)-1 \leq \TC_n(\R H_{r,s})\leq n(s+r-1)+1.
\end{equation}  
\end{theorem}
\begin{proof}
\noindent \textbf{Case-1:} \emph{$s= 2^{t_1} +1$ and $r=2^{t_2}$.}\\ 
We now use the cohomology ring description of $\R H_{r,s}$, as provided in \Cref{thm: coring}, to derive the following equality. Consider the following expression
\begin{align*}
    \bar{a}_i ^ {2(s-1)-1} &= \sum _{j=0}^{2s-3} \binom {2s-3} {j} a_i^{2s-j-3}  a_{i-1}^{j}\\
    &= \sum _{j=0}^{3} \binom {2s-3} {s-j} a_{i}^{s+j-3}  a_{i-1}^{s-j}  ~~(\text{follows from \Cref{thm: coring}}).
\end{align*}
Using Lucas's theorem we have $\binom{2^{i-1}}{j}=1$ (mod $2$), which allows the expression to reduce
 \[ \bar{a}_i ^ {2(s-1)-1}=\sum _{j=0}^{3} a_{i}^{s+j-3}  a_{i-1}^{s-j}, ~~\text{for}~~ 2\leq i\leq n. \]
Observe that $\bar{a}_i^{2(s-1)-1}$ is nonzero, as the elements of the form  $a_{i}^{s+j-3}  a_{i-1}^{s-j} $ are part of a basis of $H^{2s-3}((\R H_{r,s})^n;\Z_2)$.
We also have the following similar expression for $2\leq i\leq n$.
\begin{align*}
    \bar{b}_i ^ {2r-1} &= \sum _{j=0}^{2r-1} \binom {2r-1} {j} b_{i}^{2r-1-j}  b_{i-1}^{j}\\
    &= \sum _{j=0}^{1} \binom {2r-1} {r-1+j} b_{i}^{r-j}  b_{i-1}^{r-1+j}  ~~(\text{follows from \Cref{thm: coring}}).\\
    &= \sum _{j=0}^{1} b_{i}^{r-j}  b_{i-1}^{r-1+j}.
\end{align*}
Observe that $\bar{b}_i ^ {2r-1}$ is nonzero
as $b_{i}^{r-j}  b_{i-1}^{r-1+j}$  is part of a basis of $H^{2r-1}((\R H_{r,s})^n)$.

We will now examine the case when $n=2k$. Consider the following nonzero products
\begin{align*}
    \prod_{i=1}^{k}\bar{a}_{2i} ^ {2(s-1)-1} 
    &= \prod_{i=1}^{k} \sum _{j=0}^{3} a_{2i}^{s+j-3}  
    a_{2i-1}^{s-j}\\
    &= \sum _{\substack{ j_\nu=0\\1\leq \nu \leq k}}^{3} a_{1}^{s-j_1}  a_{2}^{s+j_{1}-3} \cdots a_{2k-1}^{s-j_k}  a_{2k}^{s+j_{k}-3},
\end{align*} 
and
\begin{align*}
    \prod_{i=1}^{k}\bar{b}_{2i} ^ {2r-1}
   &= \prod_{i=1}^{k} \sum _{j=0}^{1} b_{2i}^{r-j}  b_{2i-1}^{r-1+j}\\
    &= \sum_{\substack{ j_\nu=0\\1\leq \nu \leq k}}^{1} b_{1}^{r-1+j_1}  b_{2}^{r-j_1} \cdots b_{2k-1}^{r-1+j_k}  b_{2k}^{r-j_k}.
\end{align*}
Let $A= \prod_{i=1}^{k}\bar{a}_{2i} ^ {2(s-1)-1}$ and $B= \prod_{i=1}^{k}\bar{b}_{2i} ^ {2r-1}$.
Now, we express their product in a simplified form as follows:
\begin{align*}\label{eq: prod ab}
   A  B 
    &= \big( \sum _{\substack { j_\nu=0\\ 1 \leq \nu \leq k}}^{3} a_{1}^{s-j_1}  a_{2}^{s+j_{1}-3} \cdots a_{2k-1}^{s-j_k}  a_{2k}^{s+j_{k}-3} \big) 
      \big( \sum _{\substack{i_\mu=0\\ 1\leq \mu \leq k}}^{1} b_{1}^{r-1+i_1}  b_{2}^{r-i_1} \cdots b_{2k-1}^{r-1+i_k}  b_{2k}^{r-i_k} \big)\\  
     &= \sum _{\substack{i_\mu=0\\ 1\leq \mu \leq k}}^{1} \sum _{\substack { j_\nu=0\\ 1 \leq \nu \leq k}}^{3} (a_{1}^{s-j_1} b_{1}^{r-1+i_1})  {(a_{2}^{s+j_{1}-3} b_{2}^{r-i_1})} \cdots (a_{2k-1}^{s-j_k} b_{2k-1}^{r-1+i_k})  (a_{2k}^{s+j_{k}-3}   b_{2k}^{r-i_k}) \\
     &=\sum_{\substack {(j_\mu, i_\mu) \in S\\ 1\leq \mu \leq k}}  \prod_{p =1}^k (a_{2p  -1}^{s-j_p} b_{2p -1}^{r-1+i_p})  (a_{2 p }^{s+j_p-3} b_{2 p}^{r-i_p}),
  \end{align*}
  where \[S=\{(0,0),(1,0),(2,0),(1,1),(2,1),(3,1)\}.\]
Since the terms $a_{2p}^{s}b_{2p}^r$, $a_{2p-1}^{s}b_{2p-1}^r$ are zero, we do not considered the choices of $(j_{\mu},i_{\mu})=(3,0),(0,1)$ in the indexing set $S$.

Next, we write $A B$ in terms of basis elements described in \eqref{eq: basis} using the relations in \eqref{eq: rel}, which will suffice to say it is nonzero.
\begin{equation}\label{eq: AB}
A B =\sum_{\substack {(j_\mu, i_\mu,j_{\mu}', i_{\mu}' ) \in T\\ 1\leq \mu \leq k}}  \prod_{p =1}^k (a_{2p  -1}^{s-j_p} b_{2p -1}^{r-i_p})  (a_{2 p }^{s-j_p'} b_{2 p}^{r-i_p'}),    
\end{equation}
where $T=\{(0,1,1,2), (0,1,0,3), (1,1,0,2), (0,2,1,1), (1,2,0,1), (0,3,0,1)\}$.

Let $$C= \prod_{i=1}^{k-1} (a_{2i-1} +a_{2i+1}).$$
Observe that the exponent of $a_{2p-1}$ in \eqref{eq: AB} is either $s$ or $s-1$. Thus, it follows from \Cref{thm: coring} that the exponents of $a_{2p-1}$ can be increased by at most one so that it remains nonzero, yielding the following equality.
\begin{align*}
    & A  B  C = A  B  \big (\sum_{l=1}^k \prod _{\substack{j=1 \\ j \neq i}}^{k} a_{2j-1} \big)=\sum_{l=1}^k\Gamma_l,
 \end{align*}
where 
\begin{align*}
   \Gamma_l &= A B \bigg( \prod _{\substack{j=1 \\ j \neq i}}^{k} a_{2j-1} \bigg ) \\
&=   \sum_{\substack {( i_\mu, i_{\mu}' ) \in T'\\ 1\leq \mu \leq k}} \bigg\{ \prod_{p =1, p\neq l}^k (a_{2p  -1}^{s} b_{2p -1}^{r-i_p})  (a_{2 p }^{s} b_{2 p}^{r-i_p'}) \bigg ( \sum_{\substack {(j_l, i_l,j_{l}', i_{l}' ) \in T\\ 1\leq \mu \leq k}}  (a_{2l  -1}^{s-j_l} b_{2l -1}^{r-i_l})  (a_{2l }^{s-j_l'} b_{2l}^{r-i_l'})   \bigg )\bigg\},
\\  
 \end{align*}
where $T'=\{(1,2),(2,1)\}$. 
Let $$D= \prod_{i=1}^{k-1} (b_{2i-1} +b_{2i+1}).$$
Then,  using a similar reason as above, we obtain the following expression
\[A B C D=A  B  C \big (\sum_{q=1}^k \prod _{\substack{j=1 \\ j \neq q}}^{k} b_{2j-1} \big).\]
Next, we write
\[A B C D=\sum_{l=1}^k\sum_{q=1}^k\Gamma_{l,q},\]
where \[\Gamma_{l,q}=\Gamma_l \bigg ( \prod_{j=1, j\neq q}^kb_{2j-1} \bigg).\]
Now we observe that \[\Gamma_{l,l}=\prod_{p =1, p\neq l}^k (a_{2p  -1}^{s} b_{2p -1}^{r-1})  (a_{2 p }^{s} b_{2 p}^{r-1}) \bigg ( \sum_{\substack {(j_l, i_l,j_{l}', i_{l}' ) \in T\\ 1\leq \mu \leq k}}  (a_{2l  -1}^{s-j_l} b_{2l -1}^{r-i_l})  (a_{2l }^{s-j_l'} b_{2l}^{r-i_l'})   \bigg ).\]
When $l \neq q,$ we have
\begin{multline*}
         \Gamma_{l,q}=
\prod_{ \substack{p =1 \\ p\neq l,q } }^k (a_{2p  -1}^{s} b_{2p -1}^{r-1})  (a_{2 p }^{s} b_{2 p}^{r-1}) \bigg ( \sum_{ (i_q, i_q')\in T'} (a_{2q-1}^s b_{2q-1}^{r-i_q})  (a_{2q}^sb_{2q}^{r-i_q'}) \bigg ) \\ \cdot  \bigg ( \sum_{\substack {(j_l, i_l,j_{l}', i_{l}' ) \in T''\\ 1\leq \mu \leq k}}  (a_{2l  -1}^{s-j_l} b_{2l -1}^{r-i_l})  (a_{2l }^{s-j_l'} b_{2l}^{r-i_l'})   \bigg ),
\end{multline*}
where $T''=\{(0,1,0,2), (0,1,1,1),(1,1,0,1), (0,2,0,1)\}.$
Note that the above expressions are written in terms of the basis elements. Then
observing the positions of the highest degree terms $a^sb^{r-1}$, it is clear that $\Gamma_{l,l} \neq \Gamma_{l',l'} \neq \Gamma_{l,l'} $ whenever $l \neq l'$. Thus, we have $\sum_{l=1}^k\sum_{q=1}^k\Gamma_{l,q} \neq 0$, i.e., $ABCD \neq 0.$
Therefore, for $n=2k$, the cohomological lower bound in \eqref{eq: lb higher tc} yields
\begin{align*}
\TC_{n} (\mathbb RH_{r,s}) & \geq k(2(s-1) -1) +k(2r-1)+2(k-1) +1\\
&=2k (s+r-1)-1.\\
&=n (s+r-1)-1.
\end{align*}

To obtain a similar bound for $n=2k+1$, we consider the following product
\[A B C  D  ( \bar{a}_{2k+1}^{s} \bar{b}_{2k+1}^{r-1}). \] 
Observe that the above product is nonzero, as it contains the term $A  B  C  D (a_{2k+1}^s b_{2k+1}^{r-1})$,  which cannot be canceled out by any other term in the expression. 
Hence, for $n=2k+1$, the cohomological bound given in \eqref{eq: lb higher tc} provides
\begin{align*}
\TC_{n} (\mathbb RH_{r,s}) 
&\geq 2k (s+r-1)-1+(s+r-1)\\
&=n (s+r-1)-1.
\end{align*}
Thus, we conclude $\TC_n(\R H_{r,s})\geq n(s+r-1)-1$.
The upper bound of \eqref{eq: tc ineq} then follows from \Cref{eq: usual dim ub}.

\noindent \textbf{Case-2:} \emph{$s= 2^{p_1}$ and $r=2^{p_2}+1$}.\\  
This case can be approached in a manner similar to case-$1$, by reversing the roles of exponents of $\bar{a}_i$'s and $\bar{b}_j$'s. 
More precisely, when $n=2k$, we will take 
the product 
\[\prod_{p=1}^{k}\bar{a}_{2p} ^ {2s-1}  \prod_{q=1}^{k}\bar{b}_{2q} ^ {2(r-1)-1} C D,\] where $C$ and $D$ are defined in the first case.
Then one can note that the product 
\[\prod_{p=1}^{k}\bar{a}_{2p} ^ {2s-1}  \prod_{q=1}^{k}\bar{b}_{2q} ^ {2(r-1)-1}\] coincides with a similar expression given in \eqref{eq: AB}. 
Therefore, similar arguments as given in the first case apply here as well.
The case $n=2k+1$ can also be treated similarly.
This completes the proof.
\end{proof}
Now we will provide the bounds on $\TC_n(\R H _{r,s})$  for more cases. 


\begin{proposition}\label{prop: tcn  gen}
Let $r\geq 2^{t_1} +1$, $s \geq 2^{t_2}$ (or $s\geq 2^{t_1} +1$, $r \geq 2^{t_2}$ ) for some positive integers $t_1, t_2$. Then 
\begin{equation}\label{eq: tc ineq ineq}
n(2^{t_1}+ 2^{t_2})-1 \leq \TC_n(\R H_{r,s})\leq n(s+r-1)+1.
\end{equation} 
Moreover, if both $r$ and $s$ are odd, then 
\[n(2^{t_1}+ 2^{t_2})-1 \leq \TC_n(\R H_{r,s})\leq n(s+r-1).\]
\end{proposition}
\begin{proof}
Recall that the basis of $H^{\ast}(\R H_{r,s};\Z_2)$ is given by $\mathcal{B}=\{a^ib^{j} : 0\leq i\leq s ~~, 0\leq j\leq r-1\}$. Since $r\geq 2^{t_1} +1$, $s \geq 2^{t_2}$, $\mathcal{B}$ contains the basis of 
$H^{\ast}(\R H_{2^{t_1} +1,2^{t_2}};\Z_2)$ (or the basis of $H^{\ast}(\R H_{2^{t_2},2^{t_2}+1};\Z_2)$). 
This gives $$\cl(\R H_{2^{t_1} +1,2^{t_2}},n)\leq \cl(\R H_{r,s},n).$$
This observation, together with \eqref{eq: lb higher tc}, proves the left inequality of \eqref{eq: tc ineq ineq}. The case  $s\geq 2^{t_1} +1$, $r \geq 2^{t_2}$ follows similarly.
The inequality $\TC_n(\R H_{r,s})\leq n(s+r-1)$  then follows from \cite[Corollary 4.7]{daundkar2023group}.
\end{proof}

\begin{remark}
It was observed by the authors in \cite{Flag} that the higher analog of Farber-Costa's \cite{FC} results fail for real projective spaces.
At this point, it is unclear whether the higher analogs of the results of Cohen-Vandembroucq \cite{abfundamentaltc} hold true or not.
If they did, we could show that the higher topological complexity of real Milnor manifold is non-maximal when their fundamental group is $\Z_2\times \Z_2$.
Nevertheless, it would be interesting to see if the higher analog of the results of Cohen-Vandembroucq also fails in the case of real Milnor manifolds.
\end{remark}

Given that the proof of the subsequent proposition resembles that of \Cref{prop: tcn gen}, we choose not to repeat it in this instance.
\begin{proposition}
    Let $r \geq 2^t$. Then
     \begin{equation}
  n(2^t+s-1)-s+2  \leq  \TC_n(\R H_{r,s})\leq n (s+r-1)+1.
 \end{equation}
\end{proposition}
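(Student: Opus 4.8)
The plan is to reduce this proposition to \Cref{thm: tcn r2t} by the same monotonicity-of-cup-length argument used in the proof of \Cref{prop: tcn gen}. The key observation is that the mod-$2$ cohomology of $\R H_{r,s}$, as described in \Cref{thm: coring}, has $\Z_2$-basis $\{a^i b^j : 0\le i\le s,\ 0\le j\le r-1\}$, and the relation $b^r + ab^{r-1}+\dots+a^s b^{r-s}=0$ only affects powers of $b$ of degree $\ge r$. In particular, for any $r'$ with $2^t\le r'\le r$, the subring generated by $a$ and $b$ together with the basis monomials $\{a^i b^j : 0\le i\le s,\ 0\le j\le r'-1\}$ behaves exactly as in $H^\ast(\R H_{r',s};\Z_2)$ as far as the multiplications needed in the proof of \Cref{thm: tcn r2t} are concerned. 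Hence the higher zero-divisors-cup-length of $\F H_{r,s}$ dominates that of $\F H_{2^t, s}$.

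First I would set up notation exactly as in the proof of \Cref{thm: tcn r2t}: let $p_i\colon(\R H_{r,s})^n\to \R H_{r,s}$ be the $i$th projection, put $a_i=p_i^\ast(a)$, $b_i=p_i^\ast(b)$, and $\bar a_i=a_{i-1}+a_i$, $\bar b_i=b_{i-1}+b_i$, which lie in $\ker(d_n^\ast)$. I would then exhibit the very same zero-divisor product that was used there — namely, with $r$ replaced by $2^t$ in the exponents —
\[
A\cdot B\cdot C\cdot \big(\text{an extra factor }\bar a_{n}^{\,s}\bar b_{n}^{\,2^t-1}\text{ when }n\text{ is odd}\big),
\]
where $A=\prod_{i=1}^{k}\bar b_{2i}^{\,2^{t+1}-1}$, $B=\prod_{i=1}^{k}\bar a_{2i}^{\,s}$, $C=\prod_{i=1}^{k-1}(a_{2i}+a_{2i+2})^{s-1}$ and $n=2k$ or $2k+1$. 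Because $2^t\le r$, all the intermediate monomials $a_{j}^{e}b_{j}^{f}$ that appear in expanding this product have $f\le 2^t-1<r$ in the even slots carrying the top $b$-power $b^{2^t}=b^{r'}$... here one must be slightly careful: the argument in \Cref{thm: tcn r2t} crucially uses $b^{r}\ne 0$ in $\R H_{r,s}$, which for $r'=2^t<r$ holds automatically since $b^{2^t}$ is a genuine basis element of $H^\ast(\R H_{r,s};\Z_2)$ (its degree is $2^t<r$). The same binomial identity $\binom{2^{t+1}-1}{j}\equiv 1\pmod 2$ is available, so the expansions of the $\bar b_{2i}^{\,2^{t+1}-1}$ go through verbatim, and the distinguished surviving monomial
\[
\Big(\prod_{i=1}^{k-1}(a_{2i-1}^{s}b_{2i-1}^{2^t-1})(a_{2i}^{s-1}b_{2i}^{2^t})\Big)(a_{2k-1}^{s}b_{2k-1}^{2^t-1})\,b_{2k}^{2^t}\ \big(\cdot\,a_{n}^{s}b_{n}^{2^t-1}\text{ if }n\text{ odd}\big)
\]
is still nonzero and is killed by no other term. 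This yields $\zl(\F H_{r,s},n)\ge n(2^t+s-1)-s+1$, hence $\TC_n(\F H_{r,s})\ge n(2^t+s-1)-s+2$ by \eqref{eq: lb higher tc}. The upper bound $\TC_n(\F H_{r,s})\le n(s+r-1)+1$ is just \eqref{eq: usual dim ub} together with $\dim\R H_{r,s}=r+s-1$ (and \cite[Theorem 3.9]{gonzalezhighertc} in the complex case, exactly as before).

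The complex case is handled as in all the previous results: \Cref{thm: coring CHrs} gives $H^\ast(\C H_{r,s};\Z_2)$ the same presentation with generators $c,d$ in degree two, so literally the same products of zero-divisors $\bar c_i$, $\bar d_i$ prove the lower bound, and the dimensional upper bound is replaced by \cite[Theorem 3.9]{gonzalezhighertc}. The main (and only real) obstacle is bookkeeping: one has to check that truncating the $b$-power at $2^t$ rather than $r$ does not create new cancellations among the basis monomials — i.e. that the ``distinguished term'' argument of \Cref{thm: tcn r2t} is robust under enlarging $r$. Since enlarging $r$ only adds new basis elements $a^i b^j$ with $j\ge 2^t$ and never identifies two of the old ones, no cancellation can be introduced, so the obstacle is merely to phrase this cleanly; this is why the authors note the proof ``resembles that of \Cref{prop: tcn gen}'' and omit it.
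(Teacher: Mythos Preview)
Your proposal is correct and follows exactly the line the paper indicates: the authors explicitly say the proof ``resembles that of \Cref{prop: tcn  gen}'' and omit it, and you carry out precisely that monotonicity-of-zero-divisors-cup-length reduction to \Cref{thm: tcn r2t}, together with the dimensional upper bound \eqref{eq: usual dim ub} (respectively \cite[Theorem 3.9]{gonzalezhighertc} in the complex case). One cosmetic point: your $A$ and $B$ are swapped relative to the paper's convention in \Cref{thm: tcn r2t}, and the boundary case $r=2^t$ needs no monotonicity at all since \Cref{thm: tcn r2t} applies directly; for $r>2^t$ your observation that the distinguished monomial lives entirely in the old basis $\{a^ib^j:0\le i\le s,\ 0\le j\le 2^t\le r-1\}$ is exactly what makes the argument go through.
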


Recall from \cite{FarberTCproj} that if $m=2^t$, then $\TC(\R P^m)=2m$.
Davis \cite{daviszclk}, has computed the higher zero-divisors-cup-length of real projective spaces in several cases. Additionally,
the authors of \cite{tcnproj} have shown that under certain assumptions $\TC_n(\R P^m)$ is maximal if $m$ is even and $n>m$. 
In particular, the following result follows from \cite[Theorem 5.11, Proposition 6.2]{tcnproj}.

\begin{proposition}\label{prop: tcn rpm}
Let $m=2^t$ and $n\geq 3$. Then 
$\TC_n(\R P^m)= n2^t+1.    
$
\end{proposition}

Next, we observe that for $m=2^t+1$, the higher topological complexity of $\R P^m$ is one less than its usual dimensional upper bound.
\begin{proposition}\label{prop: tcnrpodd}
Let $m=2^t+1$. Then, for $n\geq 3$, we have $\TC_n(\R P^m)=nm$.    
\end{proposition}
\begin{proof}
Note that, if $m=2^t+1$, the equality $\cl(\R P^m,n)=nm-1$ follows from  \cite[Theorem 1.6]{daviszclk}.
Therefore, from \eqref{eq: lb higher tc}, we have $\TC_n(\R P^m)\geq nm$. 
Note that, the odd dimensional projective spaces admit a free circle-action.
 Therefore, $\TC_n(\R P^m)\leq nm-1+1$ follows from \cite[Corollary 4.7]{daundkar2023group}. This gives us the desired equality.
\end{proof}

The next result  establishes a connection between the  $\TC_n(\R H_{r,s})$, and $\TC_n(\R P^{r-1}\times \R P^s)$.
\begin{proposition}\label{thm: tcnrhrs rprprs}
Let $r= 2^{t_1 }+1$ and  $s=2^{t_2}$  for some positive integers $t_1, t_2$. Then, for $n\geq 3$, we have
\begin{equation}\label{eq: ngeq3-prod-proj-RHrs}
n (s+r-1) -1 \leq \TC_n(\mathbb RH_{r,s}), \TC_n(\mathbb RP^{r-1}\times \mathbb RP^s )-1 \leq n(s+r-1)+1 .    
\end{equation}
\end{proposition}
\begin{proof}
This follows from \Cref{prop: tcn rpm} and \Cref{thm: tcn fhrs}.
\end{proof}

\begin{remark}
It was previously shown in \cite{TCmilnor} that for $s=2^{t_1}+1$ and $r=2^{t_2}$, the difference between $\TC(\R H_{r,s})$ and $\TC(\R P^r\times \R P^s)$ can be at most be $1$.
\end{remark}

\section{Higher equivariant topological complexity of Milnor manifolds}\label{sec: eqtcn}
The concept of the equivariant topological complexity was first introduced by Colman and Grant in \cite{colmangranteqtc} and later extended by Bayeh and Sarkar in \cite{byehsarkareqtcn} as the higher equivariant topological complexity. In this section, we explore the higher equivariant topological complexity of Milnor manifolds.

In \cite{daundkar2023group}, the first author demonstrated that for certain $G$-spaces, the inequality \[\TC_n(X)\leq n\dim(X)-\dim(G)+1\] holds.
Here, we establish an analogous result for $\TC_{G,n}(X)$, when $X$ is a free, metrizable $G$-space.

\begin{proposition}\label{prop: eqtcn improved ub}
Let $X$ be a free metrizable $G$-space. Then
\[\TC_{G,n}(X)\leq n\cdot\mathrm{dim}(X)- \mathrm{dim}(G)+1.\]
\end{proposition}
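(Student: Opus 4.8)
The plan is to imitate the classical argument that for a free $G$-space $X$ one has $\mathrm{cat}_G(X) \le \dim(X) - \dim(G) + 1$, and then feed this into the equivariant analogue of the inequality $\TC_n(X) \le \mathrm{cat}(X^n)$. First I would recall the known relation between higher equivariant topological complexity and equivariant LS-category: for a free $G$-space, $\TC_{G,n}(X) = \sct_G(\pi_n) \le \mathrm{cat}_G(X^n)$, where $G$ acts diagonally on $X^n$ (this is the equivariant refinement of \eqref{eq: tcn lbub}, valid because the diagonal in $X^n$ is $G$-equivariantly a section of $\pi_n$ over a $G$-invariant neighbourhood; on a free metrizable space the standard covering-homotopy and categorical-cover manipulations go through $G$-equivariantly). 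So it suffices to bound $\mathrm{cat}_G(X^n)$.

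Next I would invoke the dimension estimate for equivariant category of a free action. Since $X$ is a free metrizable $G$-space, the orbit map $X \to X/G$ is a principal $G$-bundle, and likewise $X^n \to X^n/G$ is a principal $G$-bundle with base a metrizable space of dimension $\dim(X^n/G) = n\cdot\dim(X) - \dim(G)$. The key point is that $\mathrm{cat}_G(X^n)$ equals the sectional category (Schwarz genus) of this principal bundle, and for a numerable principal bundle over a space $B$ the genus is at most $\dim(B) + 1$; equivalently, one can cover $X^n/G$ by $\dim(X^n/G)+1$ open sets over each of which the bundle is trivial, pull these back to a $G$-invariant open cover of $X^n$, and on each piece the $G$-map to $X^n$ factoring through a single orbit (via the trivialization) witnesses $G$-null-homotopy in the equivariant sense. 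This yields $\mathrm{cat}_G(X^n) \le n\cdot\dim(X) - \dim(G) + 1$, and combining with the previous paragraph gives the claim.

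I would organize the write-up in two lemmas: (i) $\TC_{G,n}(X) \le \mathrm{cat}_G(X^n)$ for $X$ a free metrizable $G$-space, proved by adapting \cite[Theorem 3.9]{gonzalezhighertc} or the $\mathrm{cat}$-bound in \eqref{eq: tcn lbub} equivariantly, using that $\pi_n$ admits a $G$-section over the diagonal neighbourhood; and (ii) $\mathrm{cat}_G(Y) \le \dim(Y/G) + 1$ for any free metrizable $G$-space $Y$, via the principal-bundle/genus argument above, then applied to $Y = X^n$ with $\dim(X^n/G) = n\dim(X) - \dim(G)$.

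The main obstacle I anticipate is purely a matter of hypotheses and not of ideas: making sure that "free metrizable" is exactly enough regularity for the constructions to be numerable/$G$-invariant (so that orbit spaces are metrizable, orbit maps are fibre bundles, and the open covers produced are genuinely $G$-invariant with honest equivariant sections). The dimension-shifting itself — that passing to the orbit space of a free action of a Lie group $G$ drops dimension by exactly $\dim(G)$ — is standard for smooth or at least locally nice actions, so I would either assume $G$ is a compact Lie group acting smoothly (which is the setting relevant to the Milnor-manifold applications, where $G \in \{\Z_2, S^1\}$) or cite the appropriate slice-theorem input; this is the one place where I would be careful to state the needed background hypothesis rather than grind through it.
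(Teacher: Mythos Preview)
Your approach is correct and is essentially the same as the paper's: bound $\TC_{G,n}(X)$ above by $\mathrm{cat}_G(X^n)$, then use that for a free metrizable $G$-space the equivariant category is controlled by the dimension of the orbit space. The only difference is one of packaging: the paper simply cites the inequality $\TC_{G,n}(X)\le \mathrm{cat}_G(X^n)$ from \cite{highereqtc} and the identity $\mathrm{cat}_G(X^n)=\mathrm{cat}(X^n/G)$ for free metrizable actions from \cite{Eqlscategory}, then applies the ordinary dimension bound $\mathrm{cat}(X^n/G)\le \dim(X^n/G)+1$, whereas you propose to re-derive the second step via the Schwarz genus of the principal bundle $X^n\to X^n/G$.
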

\begin{proof}
It follows from \cite[Proposition 3.17]{byehsarkareqtcn} that \[\TC_{G,n}(X)\leq \ct_G(X^n).\]
Since $X^n$ is a free metrizable space, \cite[Proposition 1.15]{Eqlscategory} gives $\ct_G(X^n)=\ct(X^n/G)$.
Therefore, using \eqref{thm: catdimub} we obtain 
\[\TC_{G,n}(X)\leq \ct_G(X^n)=\ct(X^n/G)\leq \mathrm{dim}(X^n/G)+1= n\cdot \
\mathrm{dim}(X)-\dim(G)+1. \qedhere\]
\end{proof}

We now compute sharp bounds on $\TC_{\Z_2,n}(\R H_{r,s})$ for certain values of $r$ and $s$.
\begin{proposition}\label{prop: z2eqtcn}
Let $r,s$ be odd integers with $r >s$ and $r\geq 2^{t_1} +1$, $s \geq 2^{t_2}$ (or $s\geq 2^{t_1} +1$, $r \geq 2^{t_2}$). Then
\begin{equation}\label{eq: tcz2 ineq}
n(2^{t_1}+ 2^{t_2})-1 \leq \TC_{\Z_2, n}(\R H_{r,s})\leq  n(s+r-1)+1.
\end{equation}
\end{proposition}
\begin{proof}
Recall from \cite[Corollary 3.8]{byehsarkareqtcn} that, the inequality
$\TC_n(X)\leq \TC_{G,n}(X)$ holds for any $G$-space $X$.
Therefore, \Cref{prop: tcn  gen} gives us the left inequality of \eqref{eq: tcz2 ineq}.
 Now observe that, under the hypothesis, \Cref{thm: free invo milnor mfds} imply that $\R H_{r,s}$ admits a free $\Z_2$-action.
Therefore, using  \Cref{prop: eqtcn improved ub} we get 
\[ \TC_{\Z_2,n}(\R H_{r,s})\leq n(s+r-1)+1.\qedhere\]
 \end{proof}

\begin{proposition}
Let $r,s $ be odd integers with $r >s$ and $r \geq 2^t.$
Then, for $n\geq 3$, we have
\begin{equation}
  n(2^t+s-1)-s+2  \leq  \TC_{\Z_2, n}(\R H_{r,s})\leq n(s+r-1)+1. 
 \end{equation}
\end{proposition}
\begin{proof}
The proof is similar to that of \Cref{prop: z2eqtcn}. \end{proof}

\begin{proposition}\label{prop: s1eqtcn}
Let $r$ and $s$ be odd integers  with $r\geq 2^{t_1} +1$, $s \geq 2^{t_2}$ (or $s\geq 2^{t_1} +1$, $r \geq 2^{t_2}$) for some positive integer $t_1, t_2$. Then 
 \begin{equation}\label{eq: tcS1 ineq}
n(2^{t_1}+ 2^{t_2})-1 \leq \TC_{S^1,n}(\R H_{r,s})\leq  n(s+r-1).
\end{equation}
\end{proposition}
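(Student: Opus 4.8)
The plan is to mirror the structure of the proof of \Cref{prop: z2eqtcn}, replacing the free $\Z_2$-action with a free $S^1$-action. First I would invoke \Cref{thm: free S1 action}: since $r$ and $s$ are odd, $S^1$ acts freely on $\R H_{r,s}$. For the lower bound, recall from \cite[Corollary 3.8]{highereqtc} that $\TC_n(X)\leq \TC_{G,n}(X)$ for any $G$-space $X$; applying this with $G=S^1$ together with the cohomological lower bound already established in \Cref{prop: tcn  gen} gives
\[
n(2^{t_1}+2^{t_2})-1\leq \TC_n(\R H_{r,s})\leq \TC_{S^1,n}(\R H_{r,s}).
\]

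For the upper bound, I would use the fact that $\R H_{r,s}$ is a free metrizable $S^1$-space (being a compact smooth manifold with a free smooth circle action). Then \Cref{prop: eqtcn improved ub} applies directly and yields
\[
\TC_{S^1,n}(\R H_{r,s})\leq n\cdot\dim(\R H_{r,s})-\dim(S^1)+1 = n(r+s-1)-1+1 = n(r+s-1),
\]
which is exactly the right-hand side of \eqref{eq: tcS1 ineq}. This is cleaner than the chain of citations used in \Cref{prop: z2eqtcn} precisely because $\dim(S^1)=1$, and it is the place where the circle case improves on the $\Z_2$ case by one.

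The main obstacle—if any—is purely bookkeeping: one must make sure the hypotheses of \Cref{prop: eqtcn improved ub} are genuinely met, i.e. that the free $S^1$-action on $\R H_{r,s}$ provided by \Cref{thm: free S1 action} is by homeomorphisms on a metrizable space, which is immediate since $\R H_{r,s}$ is a smooth closed manifold. One should also note that the notation $\TC_{n,S^1}$ appearing in the statement is the same as $\TC_{S^1,n}$ used elsewhere. With these identifications in place the two displayed inequalities combine to give \eqref{eq: tcS1 ineq}, completing the proof.
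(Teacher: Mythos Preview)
Your proposal is correct and follows essentially the same route as the paper: invoke \Cref{thm: free S1 action} for the free $S^1$-action, obtain the lower bound from \Cref{prop: tcn  gen} via the general inequality $\TC_n(X)\leq \TC_{G,n}(X)$, and obtain the upper bound directly from \Cref{prop: eqtcn improved ub}. Your write-up is in fact slightly more explicit than the paper's, which merely points to \Cref{prop: eqtcn improved ub} and to the argument of \Cref{prop: z2eqtcn}.
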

\begin{proof}
Since both $r$ and $s$ are odd, \Cref{thm: free S1 action} implies that $\R H_{r,s}$ admits a free $S^1$-action.
 Then the right inequality of \eqref{eq: tcS1 ineq} follows from \Cref{prop: eqtcn improved ub}.
 The left inequality of \eqref{eq: tcS1 ineq} can be derived from the analogous calculations to those presented in \Cref{prop: z2eqtcn}
\end{proof}

\begin{proposition}
 Let $r$ and $s$ be odd integers with $r\geq 2^t$ for some positive integer $t$.  
 Then, for $n\geq 3$, we have
 \begin{equation}
  n(2^t+s-1)-s+2  \leq  \TC_{S^1,n}(\R H_{r,s})\leq n(s+r-1). 
 \end{equation}
\end{proposition}
\begin{proof}
The proof is similar to that of \Cref{prop: s1eqtcn}.
\end{proof}

\vspace{0.6cm}

\noindent \textbf{Declaration of competing interest:}
The authors declare that they have no known competing financial interests or personal relationships that
could have appeared to influence the work reported in this paper.

\bibliographystyle{plain} 
\bibliography{references}
\end{document}